\documentclass[11pt, a4paper,DIV=11]{scrartcl}

\usepackage{amsthm,amsfonts,amsmath,latexsym,mathrsfs,amssymb}
\usepackage{mathtools}

\usepackage[utf8]{inputenc}
\usepackage{xcolor}

\usepackage{libertine}
\usepackage{libertinust1math}
\usepackage[T1]{fontenc}

\usepackage{hyperref}
\hypersetup{colorlinks=true, linkcolor=blue!60!black, citecolor=blue!60!black, urlcolor=blue!60!black}

\newtheorem{theorem}{Theorem}[section]
\newtheorem{lemma}[theorem]{Lemma}
\newtheorem{result}[theorem]{Result}

\newtheorem{corollary}[theorem]{Corollary}

\theoremstyle{definition}

\theoremstyle{remark}

\numberwithin{equation}{section}

\newcommand{\eps}{\varepsilon}

\newcommand{\OM}{\Omega}
\newcommand{\disk}{\mathbb{D}}
\newcommand{\Cp}{\mathbb{C}_{01}}
\newcommand{\Chat}{\widehat{\mathbb C}}

\newcommand{\hol}{\mathcal{O}}

\newcommand{\C}{\mathbb{C}}

\newcommand{\N}{\mathbb{N}}

\newcommand{\D}{\mathbb{D}}

\usepackage{enumitem}

\setlist[enumerate]{
  itemsep=2pt,
  topsep=2pt,
  parsep=0pt,
  partopsep=0pt,
}

\makeatletter
\def\blfootnote{\gdef\@thefnmark{}\@footnotetext}
\makeatother

\begin{document}

% \title
\title{Uniformization of domains in the Riemann sphere via the Kobayashi metric}

\author{Jaikrishnan Janardhanan \thanks{The author is supported by a grant from ANRF: \textbf{ANRF/ARGM/2025/000619/MTR} \\
\href{mailto:jaikrishnan@iitpkd.ac.in}{jaikrishnan@iitpkd.ac.in}\\
Department of Mathematics, Indian Institute of Technology Palakkad,
Palakkad, Kerala-678623, India} }

\date{\today}

\maketitle

\begin{abstract}
  We prove the uniformization theorem for domains in the
  Riemann sphere via the Kobayashi metric. Our proof does not rely on the
  existence of the elliptic modular function and instead uses the
  characterization of complete Kobayashi hyperbolicity for domains in the
  sphere. 
\end{abstract}

\blfootnote{\textup{2020} \textit{Mathematics Subject Classification}:
{Primary: 30F45; Secondary: 30F10, 32Q45}}

\blfootnote{\textit{Keywords and phrases}:  Complete
hyperbolicity, uniformization theorem, Kobayashi metric.}

\section{Introduction}

Ahlfors, in his seminal work \cite{Ahlfors1938}, made the crucial connection
between curvature and complex analysis by showing that the classical
Schwarz--Pick lemma can be interpreted as a statement about negative curvature.
Broadly speaking, Ahlfors showed that a holomorphic map from the unit disk to a
Riemann surface equipped with a metric of Gaussian curvature at most $-4$ is
distance-decreasing. This result is now known as Ahlfors' lemma, or the
Ahlfors--Schwarz lemma. Ahlfors used this lemma to give elementary proofs of
several classical theorems, including the theorems of Picard, Schottky and
Bloch, without recourse to the elliptic modular function; see
\cite[Chapter~1]{Ahlfors1960ConformalInvariants}. Since then, Ahlfors' lemma and its
variants have become central tools in complex analysis and geometry; see
\cite{Osserman1999Notices,kim2011schwarz}.

Inspired by Ahlfors' lemma and its various generalizations, Kobayashi \cite{Kobayashi1967} introduced a pseudodistance, now known as the
Kobayashi pseudodistance, on complex manifolds. Holomorphic maps are
distance-decreasing for the Kobayashi pseudodistance, just as holomorphic maps
of the disk are distance-decreasing for the Poincar\'e distance. A complex
manifold is called \emph{Kobayashi hyperbolic} when this pseudodistance is a
genuine distance. In the context of Riemann surfaces, which is the original
setting of Ahlfors' lemma, hyperbolicity in the sense of Kobayashi coincides
with the classical notion: a Riemann surface is Kobayashi hyperbolic if and only
if its universal covering surface is the unit disk. This follows from the
uniformization theorem and the fact that Kobayashi hyperbolicity is preserved
under covering maps. To the best of our knowledge, all known proofs of this
fact use the uniformization theorem.

For domains in the sphere, however, one can prove the equivalence of Kobayashi
hyperbolicity and classical hyperbolicity using the much simpler special case of
the uniformization theorem for domains in the Riemann sphere. This result seems
to go back to Poincar\'e. Most modern proofs, such as the one in
\cite[Chapter~13]{Zakeri2021}, begin by first constructing the elliptic modular
function and thereby showing that the universal covering of $\Cp := \C \setminus
\{0,1\}$ is the unit disk. Having constructed the elliptic modular function, the strategy is then
modeled on the standard proof of the Riemann mapping theorem: one considers an
appropriate family of holomorphic functions whose extremal member is the
required covering map. A typical formulation fixes $\alpha\in \OM$ (here $\OM$
is a domain in $\Cp$) and considers the family of all
holomorphic covering maps $f:\D\to\Cp$ with $f(0)=\alpha$ for which a suitable restriction
of $f$ is a covering map onto $\OM$. As is apparent, the family involved in this
extremal argument is quite complicated, and the proof that the extremal map is
the required covering map is technical. In particular, one needs the
elliptic modular function to even show that this family is nonempty.

In \cite{Bharathi2026kob}, we have shown, without relying on any form of
uniformization, that a domain in the sphere is complete Kobayashi hyperbolic iff
its complement contains at least
three points. The proof is elementary and
uses ideas from the rescaling technique due to Zalcman. The purpose of the present paper is to use
this characterization to prove the uniformization theorem for domains in the
sphere. Apart from being of intrinsic interest, our proof is also less technical than the
aforementioned proof of the uniformization theorem using the elliptic modular function.
The main simplification lies in the family to which we apply an extremal
argument. Let $X$ be the universal cover of a bounded planar domain $\OM$. We
consider the family of holomorphic coverings from $X$ onto subdomains of the unit disk containing $0$ and
maximize the derivative at a fixed base point. The family of such maps is
nonempty by hypothesis, and an extremal map exists by Montel's theorem. It is straightforward
that this extremal map is a local biholomorphism. We then use the
properties of the Kobayashi--Royden metric to show that this extremal map is
again a covering. The
classical square-root trick, as in the proof of the Riemann mapping theorem,
forces the extremal image to be the whole disk. It follows that $X$ is
biholomorphic to $\D$ and this settles the case of bounded
domains. The general case then follows from a straightforward exhaustion
argument and Montel's theorem. 

Of course, one might argue that whatever simplification we have achieved is by importing
the machinery of Kobayashi hyperbolicity as a substitute for the elliptic
modular function. We regard this as a virtue rather than a defect. In
\cite{Bharathi2026kob}, we also showed that Kobayashi hyperbolicity and completeness are
equivalent to the classical theorems of Landau, Picard,
Schottky and Montel. This was first shown by Hahn \cite{Hahn1980}, but his proof
relies on the fact that the universal cover of $\Cp$ is the unit disk. 
However, once the completeness of the Kobayashi metric on $\Cp$ is established,
the equivalence follows by a short and simple argument; see Section~4 and Theorem~6.2 of
\cite{Bharathi2026kob}.
This suggests that Kobayashi hyperbolicity provides a
natural unifying framework in which to understand these theorems much like differential forms provide a natural unifying framework in which to
understand the classical theorems of Gauss, Green and Stokes.  
The present paper adds the uniformization theorem to the list of results that
can be understood via Kobayashi hyperbolicity.

There is another framework in which one can prove both the classical theorems of
Montel, Picard, Schottky and Landau as well as the uniformization theorem for domains
in the sphere without using the elliptic modular function.  This is Heins' \cite{Heins1962}
theory of $SK$-metrics which was directly inspired by Ahlfors' lemma and Ahlfors'
notion of ultrahyperbolic metrics (\cite[Definition~1-1]{Ahlfors1960ConformalInvariants}).
An $SK$-metric on a Riemann surface is an upper-semicontinuous conformal
pseudometric whose generalized Gaussian curvature is at most $-4$; see
\cite[Chapter~1]{Heins1962} for details. In particular,
any conformal metric with curvature bounded above by $-4$ is an $SK$-metric.
Heins showed that if a Riemann surface admits
an $SK$-metric, then there is a unique maximal $SK$-metric on the surface, and
furthermore this metric is complete, regular (in fact, real-analytic) and has
constant curvature $-4$. Grauert and Reckziegel \cite{Grauert1965hermitian}
have constructed a conformal metric on $\Cp$ with curvature bounded above by
$-4$ and therefore, by Heins' theory, $\Cp$ admits a complete conformal
metric of constant curvature $-4$. It is now not difficult to show that the
universal cover of $\Cp$ is the unit disk. A detailed modern exposition of all of these
facts can be found in the article by
Kraus and Roth \cite{RothKraus2013}. It is worthwhile highlighting some merits
of our approach in comparison to the $SK$-metric approach:
\begin{enumerate}
        \item The proof of the existence of the maximal $SK$-metric and its
        properties is quite technical and long. In contrast, our proof of the
        uniformization theorem is elementary and all the tools we use (for example,
        Montel's theorem) themselves admits elementary proofs using the
        Kobayashi metric. 
        \item While there are many constructions of $SK$-metrics on $\Cp$, none
        of them are entirely transparent and intuitive.
        \item The Kobayashi metric and Kobayashi hyperbolicity are now standard
        tools in several complex variables. Seeing these tools in action in the
        simple setting of planar domains has great pedagogical value.
\end{enumerate}

\subsection*{Notation.} \begin{enumerate}
        \item We write \(\hol(X,Y)\) for the set of holomorphic maps from a
        complex manifold \(X\) to a complex manifold \(Y\). When $Y = \C$, we simply write \(\hol(X)\) for \(\hol(X,\C)\).
        \item The unit disk in \(\C\) is denoted by \(\D\). The Riemann sphere
        is denoted by \(\Chat\). We denote the twice punctured plane $\C
        \setminus \{0,1\}$ by $\Cp$.
        \item We normalize the curvature of the Poincar\'e metric on the unit disk to be $-4$. With this normalization, the Poincar\'e metric on $\D$ is given by:
        \[
                \rho(z;v) := \frac{|v|}{1-|z|^2}.
        \]

        \item The Kobayashi pseudodistance and the Kobayashi--Royden
        pseudometric on a complex manifold \(X\) are denoted by \(d_X\) and
        \(F_X\), respectively.  We write \(\kappa_D\) for the Kobayashi--Royden
        density on a planar domain \(D\) defined by
        \[
                \kappa_D(z) := F_D(z;1).
        \]
        \item We denote the length of a piecewise \(C^1\) path \(\gamma\) in a complex manifold \(X\) with respect to the Kobayashi--Royden pseudometric by \(L_X(\gamma)\).
        \item All Riemann surfaces are assumed to be connected.

\end{enumerate}

\section{Technical necessities}

We refer the reader to our earlier paper \cite{Bharathi2026kob} for a quick
overview of the Kobayashi pseudodistance and the Kobayashi--Royden
pseudometric. More expansive treatments can be found in
\cite{Abate2023book,SKobayashi1998,jp2013}.

We shall use the well-known fact that the topology induced by the Kobayashi
distance on a hyperbolic complex manifold is the same as the manifold topology
without explicit mention. We state one major result about the
behaviour of the Kobayashi pseudodistance and Kobayashi--Royden pseudometric
under holomorphic covering maps, which will play a central role in the proof of the
uniformization theorem.

\begin{result}\label{R:cover-isometry}
Let \(\pi:X\to Y\) be a holomorphic covering map of complex manifolds.  Then
$\pi$ is a local isometry with respect to the Kobayashi--Royden pseudometrics
\(F_X\) and \(F_Y\) on \(X\) and \(Y\), respectively. More precisely, we have 
\[
  \pi^*F_Y = F_X,
\]
where $\pi^*F_Y (x;v) := F_Y(\pi(x); d\pi_x v)$ is the pullback of \(F_Y\) by \(\pi\). 
Furthermore,
\[
        d_Y(y_1,y_2)=\inf\{d_X(x_1,x_2):\pi(x_1)=y_1,\ \pi(x_2)=y_2\},
\]
where \(d_X\) and \(d_Y\) are the Kobayashi pseudodistances on \(X\) and \(Y\),
respectively. 
In particular, $Y$ is (complete) Kobayashi hyperbolic if and only if $X$ is
(complete) Kobayashi hyperbolic. 

\end{result}

We will use the following result from \cite{Bharathi2026kob} that characterizes
complete Kobayashi hyperbolicity for domains in the Riemann sphere. We emphasize
once again
that the proof of this result does not use any form of uniformization and is
elementary. 

\begin{result}[Theorem~6.2 of \cite{Bharathi2026kob}]\label{R:complete-hyperbolic}
  A domain in the Riemann sphere is Kobayashi hyperbolic if and only if its
  complement contains at least three points. Moreover, such domains are complete
  Kobayashi hyperbolic. 
\end{result}

Montel's theorem is a direct consequence of the above result. 

\begin{result}[Montel's theorem]
   Let $D \subset \Chat$ be a domain whose complement contains at least three
   points. Then $\hol(X,D)$ is a normal family, where $X$ is any complex manifold.      
\end{result}

We require the following basic lemma about the behaviour of the
Kobayashi--Royden pseudometric under an increasing union.

\begin{lemma}\label{L:exhaustion}
Let
\[
    D_1\subset D_2\subset\cdots\subset\C
\]
be domains whose union is a Kobayashi hyperbolic domain \(D\). Then
\[
    \kappa_{D_j}\longrightarrow \kappa_D
\]
uniformly on compact subsets of \(D\).
\end{lemma}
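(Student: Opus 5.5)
The plan is to prove pointwise monotone convergence first and then upgrade it to uniform convergence on compacta via an equicontinuity/Dini-type argument.

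\emph{Monotonicity and lower bound.} Since the inclusions \(D_j\hookrightarrow D_{j+1}\hookrightarrow D\) are holomorphic and the Kobayashi--Royden pseudometric decreases under holomorphic maps, we get \(\kappa_{D_1}\ge\kappa_{D_2}\ge\cdots\ge\kappa_D\) on the sets where they are defined. Every compact \(K\subset D\) lies in some \(D_j\), because the \(D_j\) are open, increasing and cover \(K\). So on \(K\) the sequence is eventually defined, it decreases, and it is bounded below by \(\kappa_D\). It remains to show \(\limsup_j \kappa_{D_j}\le \kappa_D\), uniformly on \(K\).

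\emph{Pointwise upper bound.} Fix \(z\in D\) and \(\eps>0\). By the definition of \(F_D\), there is \(f\in\hol(\D,D)\) with \(f(0)=z\) and \(f'(0)=\lambda\ne0\) such that \(1/|\lambda|<\kappa_D(z)+\eps\). Here \(\kappa_D(z)>0\) by hyperbolicity, although this is not even needed. Shrink \(f\) by setting \(f_r(\zeta):=f(r\zeta)\) with \(r<1\) close to \(1\). Then \(f_r(\D)\subset f(\overline{D(0,r)})\), which is compact in \(D\) and hence contained in \(D_j\) for large \(j\). Since \(f_r'(0)=r\lambda\), this gives \(\kappa_{D_j}(z)\le 1/(r|\lambda|)\), and letting \(r\to1\) yields pointwise convergence.

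\emph{Uniformity.} To get uniformity on \(K\) without proving continuity of \(\kappa_D\), I would use the same extremal disk at nearby points. Put \(g(\zeta)=f(r\zeta)+(w-z)\) for \(|w-z|<\delta\). Choose \(\delta\) smaller than the distance from the compact set \(f(\overline{D(0,r)})\) to \(\partial D\). Then \(g\) maps \(\D\) into a compact subset \(L_z\subset D\), which is independent of \(w\), and \(g(0)=w\), \(g'(0)=r\lambda\). Hence there is \(j_z\) with \(L_z\subset D_{j}\) for \(j\ge j_z\), and for all \(w\in D(z,\delta)\),
\[
  \kappa_{D_j}(w)\le \frac{1}{r|\lambda|}\le \kappa_D(z)+2\eps .
\]
Cover \(K\) by finitely many such disks \(D(z_i,\delta_i)\) and take \(j\ge\max_i j_{z_i}\). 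We then need \(\kappa_D(z_i)\le \kappa_D(w)+\eps\) for \(w\) near \(z_i\). This is a form of upper semicontinuity in the reverse direction, and it is the \textbf{main obstacle}. I would settle it by the symmetric argument. For \(w\in D\), take a near-extremal disk \(h\) for \(w\) and translate it to obtain a competitor at \(z_i\), which gives \(\kappa_D(z_i)\le\kappa_D(w)+\eps\) whenever \(|w-z_i|\) is small. The difficulty is that the allowed translation size depends on \(h\), hence on \(w\), so this does not immediately give a uniform \(\delta\).

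To control this, I would instead use the standard fact that \(\kappa_D\) is continuous, indeed locally Lipschitz, on a hyperbolic planar domain. This follows, for instance, from the comparison \(\kappa_D\ge\) a positive constant on compacta together with the composition trick \(\zeta\mapsto f(\phi(\zeta))\), where \(\phi\) is a disk automorphism. Alternatively, one can note that the functions \(\kappa_{D_j}\) are upper semicontinuous, by the translation argument above, and decrease pointwise to \(\kappa_D\). Dini's theorem for a decreasing sequence of upper semicontinuous functions converging to a lower semicontinuous limit then gives uniform convergence on \(K\), provided \(\kappa_D\) is lower semicontinuous. Lower semicontinuity of \(\kappa_D\) follows from Montel's theorem. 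Take \(w_n\to w\) and near-extremal disks \(f_n\) at \(w_n\). This family is normal, since \(\hol(\D,D)\) is normal by Result~\ref{R:complete-hyperbolic}. A subsequence converges to a map \(f\in\hol(\D,\overline D)\) with \(f(0)=w\in D\), so \(f\in\hol(\D,D)\) by the maximum principle or Hurwitz. Moreover \(f'(0)=\lim f_n'(0)\), which yields \(\kappa_D(w)\le\liminf\kappa_D(w_n)\). With this, Dini's argument completes the proof.
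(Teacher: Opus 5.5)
Your proof is correct and is essentially the paper's: monotonicity plus the shrunken near-extremal disk $\zeta\mapsto f(r\zeta)$ gives decreasing pointwise convergence, and Dini's theorem finishes the job, with your second alternative (upper semicontinuity of each $\kappa_{D_j}$ by translating the shrunken disk, lower semicontinuity of $\kappa_D$ via Montel, and the semicontinuous form of Dini) supplying the semicontinuity hypotheses that the paper leaves implicit when it simply invokes Dini. Drop the unsupported ``locally Lipschitz'' aside, and justify $f(\D)\subset D$ by Hurwitz applied to $f_n-a$ for each $a\in\C\setminus D$ rather than by the maximum principle, since openness of $f(\D)$ inside $\overline D$ does not rule out $f$ hitting a slit of $D$.
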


\begin{proof}
Fix \(z\in D\) and choose $N_0 \in \N$ such that $z \in D_j, j \geq N_0$. We have the obvious inequalities
\[ 
        \kappa_D(z)\leq \kappa_{D_{j+1}}(z)\leq \kappa_{D_j}(z).
\]
Thus \(\kappa_{D_j}(z)\) decreases to a limit \(L\geq\kappa_D(z)\). We need to show \(L\leq\kappa_D(z)\).

Let \(\eps>0\).  By the definition of \(\kappa_D(z)\), there is \(f\in\hol(\D,D)\) with \(f(0)=z\) such that
\[
        \frac1{|f'(0)|}<\kappa_D(z)+\eps.
\]
Fix \(0<r<1\). We may assume, by increasing $N_0$ if necessary, that the compact
set \(f(r\overline\D)\) is contained in \(D_j\) for all \(j \geq N_0\). Define
the holomorphic function $g_r:\D\to D_j$ by $g_r(\zeta) := f(r\zeta)$. 
We have $g_r(0) = z$ and $g_r'(0) = r f'(0)$ and therefore
\[
        \kappa_{D_j}(z)\leq \frac1{r|f'(0)|}
        <\frac{\kappa_D(z)+\eps}{r}.
\]
Letting \(j\to\infty\), then \(r\to1\), and finally \(\eps\to0\), gives
\(L\leq \kappa_D(z)\). Hence \(\kappa_{D_j}(z)\to\kappa_D(z)\) pointwise.
Dini's theorem gives uniform convergence on compact subsets of \(D\). 
\end{proof}

The following lemma is a special case of a famous theorem from Riemannian
geometry due to Ambrose; see \cite[Theorem~5.4]{Sakai1996} for the general version. We give a proof in the present setting for the sake of completeness.

\begin{lemma}\label{L:cover}
  Let \(X\) and \(Y\) be Kobayashi hyperbolic Riemann surfaces equipped with
  the Kobayashi--Royden metrics \(F_X\) and \(F_Y\), respectively. Let \(d_X\) and
  \(d_Y\) denote the corresponding distances. Assume that \((X,d_X)\) is
  complete and that $Y$ is simply connected. Let $f:X \to Y$ be a local
  biholomorphism that is also a local
  isometry with respect to \(F_X\) and \(F_Y\). Then $f$ is a biholomorphism.
\end{lemma}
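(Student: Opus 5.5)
The plan is to show that \(f\) is a covering map. Since \(Y\) is simply connected and \(X\) is connected, a covering \(f\) is then a bijection, hence a biholomorphism. To get the covering property, I will use the classical path-lifting criterion: a local homeomorphism with the unique path lifting property for all paths is a covering onto a simply connected (or at least connected, locally path connected) space. So the main task is to lift arbitrary paths in \(Y\) through \(f\).

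\textbf{Step 1: local facts.} Since \(f\) is a local biholomorphism and \(f^*F_Y = F_X\), we have \(L_X(\tilde\gamma) = L_Y(f\circ\tilde\gamma)\) for every piecewise \(C^1\) path \(\tilde\gamma\) in \(X\). Because \(d_X\) and \(d_Y\) are the inner distances of \(F_X\) and \(F_Y\) (the Kobayashi distance is the integrated form of the Kobayashi--Royden metric), it follows that \(d_Y(f(x),f(x'))\le d_X(x,x')\). Lifts are unique because \(f\) is a local homeomorphism and \(X\) is Hausdorff. It also suffices to lift piecewise \(C^1\) paths. Indeed, any continuous path is homotopic rel endpoints to one that is piecewise \(C^1\) inside a chain of small coordinate disks, and the general criterion can equally be run with piecewise smooth paths together with local path-connectedness by such paths.

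\textbf{Step 2: lifting a path (the main obstacle).} Let \(\gamma:[0,1]\to Y\) be piecewise \(C^1\), and choose \(x_0\) with \(f(x_0)=\gamma(0)\). Let \(T\) be the supremum of the \(t\) for which a lift \(\tilde\gamma\) on \([0,t)\) starting at \(x_0\) exists. Then \(T>0\) by the local homeomorphism property, and the lifts on subintervals patch together by uniqueness. For \(s<t<T\) we have
\[
d_X(\tilde\gamma(s),\tilde\gamma(t)) \le L_X(\tilde\gamma|_{[s,t]}) = L_Y(\gamma|_{[s,t]}),
\]
and the right-hand side tends to \(0\) as \(s,t\to T\). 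Thus \(\tilde\gamma(t)\) is Cauchy in \((X,d_X)\). By completeness it converges to some \(x^*\), and by continuity \(f(x^*)=\gamma(T)\). Since the Kobayashi topology agrees with the manifold topology, the convergence holds in \(X\). Next take a neighbourhood \(U\ni x^*\) on which \(f\) is a homeomorphism onto \(V\ni \gamma(T)\). For \(t\) near \(T\), \(\tilde\gamma(t)\in U\), so \(\tilde\gamma\) coincides there with \((f|_U)^{-1}\circ\gamma\). This extends the lift past \(T\) when \(T<1\), and to \(T\) itself when \(T=1\). Hence \(T=1\) and the full lift exists. The delicate points here are the length identity and the use of completeness; everything else is routine.

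\textbf{Step 3: conclusion.} The same continuation argument, carried out parameter-wise, lifts homotopies: one lifts each path \(\gamma_s\) and uses local homeomorphisms on a compact neighbourhood to show continuity in \(s\), as in the standard proof of the monodromy theorem. So \(f\) has the homotopy lifting property. Surjectivity follows by lifting a path from \(f(x_0)\) to any point of \(Y\). For injectivity, suppose \(f(x_1)=f(x_2)\). Take a path \(\alpha\) from \(x_1\) to \(x_2\). Then \(f\circ\alpha\) is a loop in the simply connected \(Y\), hence null-homotopic rel endpoints. Lifting the homotopy shows that the lift of \(f\circ\alpha\) starting at \(x_1\) ends at \(x_1\). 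By uniqueness that lift is \(\alpha\), so \(x_2=x_1\). Therefore \(f\) is a bijective local biholomorphism, i.e.\ a biholomorphism.
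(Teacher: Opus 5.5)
Your proposal is correct and is essentially the paper's proof recast in the language of path lifting. Your Step~2 is exactly the paper's argument: the lift has the same Kobayashi length as $\gamma$, so it is $d_X$-Cauchy and extends by completeness, and a local inverse of $f$ at the limit point then continues it. Your homotopy-lifting Step~3 replaces the paper's appeal to the monodromy theorem, and you prove surjectivity and injectivity directly rather than building a global inverse $g$ and using the identity theorem. The reduction from continuous to piecewise $C^1$ paths that you flag is the same technical point the paper passes over when it invokes the monodromy theorem after continuing $g_0$ only along piecewise $C^1$ paths.
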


\begin{proof}

Choose \(x_0\in X\), and set \(y_0 := f(x_0)\). Since \(f\) is a local
  biholomorphism, there is a neighbourhood \(V_0\) of \(y_0\) and a holomorphic
  inverse $g_0:V_0\to X$ of \(f\) defined on \(V_0\) such that $g_0(y_0)=x_0$. 
  We first show that \(g_0\) admits analytic continuation along every
  piecewise \(C^1\) path in \(Y\) starting at \(y_0\). Let
  \(\gamma:[0,1]\to Y\) be such a path. Define
  \[
    E := \{t\in[0,1]: \text{the germ of $g_0$ at $y_0$ can be analytically continued along $\gamma|_{[0,t]}$}\}.
  \] 
  This set is nonempty and open. We will show that \(E\) is also closed. Let \(t^* := \sup E\).  If \(t^*=1\), then we are done. So assume \(t^*<1\)  
  and suppose that the germ of $g_0$ at $y_0$ has
  been analytically continued along \(\gamma|_{[0,t^*)}\). Denote by $g_t$ the
  analytic continuation of $g_0$ along $\gamma|_{[0,t^*)}$ and define
  \[
        \widetilde\gamma(t) := g_t(\gamma(t)).
  \]
  Since \(f\) is a local isometry for the Kobayashi--Royden metrics, the
  lift has the same Kobayashi length as \(\gamma\). Indeed, if $s,t \in
  [0,t^*)$ with $s < t$, we have
  \[
          d_X(\widetilde\gamma(s),\widetilde\gamma(t))
        \leq L_X(\widetilde\gamma|_{[s,t]})
        = L_Y(\gamma|_{[s,t]}). 
  \]
  It follows that $\widetilde\gamma$ is
  uniformly continuous. As $X$ is complete, $\widetilde\gamma$ extends
  to a continuous path on $[0,t^*]$.  Let $x^* := \widetilde\gamma(t^*)$.  We
  have $f \circ \widetilde\gamma = \gamma$ on $[0,t^*]$. Since
  $f$ is a local biholomorphism, there is a neighbourhood \(V^*\) of \(y^* :=
  \gamma(t^*) = f(x^*)\) such that \(f\) admits a holomorphic inverse \(g^*:V^*\to X\)
  defined on \(V^*\) with \(g^*(y^*)=x^*\).  For \(t<t^*\) sufficiently close to \(t^*\), the point \(\gamma(t)\) lies in
\(V^*\). Both $g_t$ and \(g^*\) are local inverses of \(f\) taking values near
  \(x^*\) and it follows that the germ of $g_0$ at \(y_0\) can be analytically
  continued along $\gamma|_{[0,t^*]}$. Thus $E$ is closed and is therefore
  equal to $[0,1]$. Hence \(g_0\)
  admits analytic continuation along \(\gamma\). Since \(Y\) is simply connected, the monodromy theorem implies that
 \(g_0\) extends to a holomorphic map $g:Y\to X$ satisfying $f\circ
 g=\textsf{id}_Y$. By the identity theorem, $g\circ f=\textsf{id}_X$. Thus \(f\) is a biholomorphism.

\end{proof}

\section{The uniformization theorem for spherical domains}

\begin{theorem}[Uniformization of domains in the Riemann sphere]
\label{thm:sphere-uniformization-streamlined}
Let \(\Omega\subset\Chat\) be a domain. Then the universal covering surface of
\(\Omega\) is biholomorphic to exactly one of $\Chat,\C$ or $\D$. More precisely:
\begin{enumerate}[label=\textup{(\roman*)}]
    \item if \(\Omega=\Chat\), then the universal cover is \(\Chat\);
    \item if \(\Chat\setminus\Omega\) consists of one point, then
    \(\Omega\cong\C\), and the universal cover is \(\C\);
    \item if \(\Chat\setminus\Omega\) consists of two points, then
    \(\Omega\cong\C \setminus \{0\}\), and the universal cover is \(\C\);
    \item if \(\Chat\setminus\Omega\) contains at least three points, then
    the universal cover is \(\D\).
\end{enumerate}
\end{theorem}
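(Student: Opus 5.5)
Cases (i)--(iii) are elementary. If \(\Omega=\Chat\), it is simply connected and is its own universal cover. If the complement is a single point, a M\"obius transformation sending that point to \(\infty\) gives \(\Omega\cong\C\). If the complement has two points, a M\"obius map sending them to \(0,\infty\) gives \(\Omega\cong\C\setminus\{0\}\), and \(\exp:\C\to\C\setminus\{0\}\) is a covering. Uniqueness (``exactly one'') follows because \(\Chat\) is compact and not homeomorphic to \(\C\) or \(\D\), and \(\C\not\cong\D\) by Liouville. The real work is (iv).

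For (iv), I would first treat a bounded domain \(\Omega\subset\C\). Let \(\pi:X\to\Omega\) be the universal cover, fix \(x_0\in X\) and a local coordinate at \(x_0\). Let \(\mathcal F\) be the family of holomorphic maps \(f:X\to\D\) with \(f(x_0)=0\) which are covering maps onto their image \(f(X)\subset\D\). This family is nonempty: after translating and scaling \(\Omega\) into \(\D\) with \(\pi(x_0)\mapsto 0\), the map \(\pi\) itself lies in \(\mathcal F\). Maximize \(|f'(x_0)|\) over \(\mathcal F\). By Montel's theorem, a maximizing sequence has a limit \(f^*:X\to\overline\D\), which is nonconstant since \(|f'(x_0)|>0\); by the maximum principle \(f^*\) maps into \(\D\). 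Each \(f\in\mathcal F\) is locally injective, so Hurwitz's theorem makes \(f^*\) locally injective, i.e.\ a local biholomorphism onto the domain \(D^*:=f^*(X)\).

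The main obstacle is showing that \(f^*\) is a covering onto \(D^*\). My plan uses the metric. By Result~\ref{R:cover-isometry}, each \(f_n\) is a local isometry \(F_X=f_n^*F_{D_n}\) with \(D_n=f_n(X)\). I would like to pass to the limit and obtain \(F_X=(f^*)^*F_{D^*}\). The inequality \(F_{D^*}(f^*(x);df^*v)\le F_X(x;v)\) is automatic from the distance-decreasing property. For the reverse inequality I need lower semicontinuity of \(\kappa_{D_n}\) along the sequence. I expect this to hold, in the spirit of Lemma~\ref{L:exhaustion}, via a kernel-type argument: compact subsets of \(D^*\) are eventually contained in \(D_n\) by Hurwitz applied to \(f_n-w\). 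This gives \(\limsup \kappa_{D_n}\le\kappa_{D^*}\) locally, which combined with the isometry for \(f_n\) yields \(F_X\le (f^*)^*F_{D^*}\).

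Once \(f^*\) is a local isometry, I would lift to universal covers. Let \(p:\widetilde D\to D^*\) be a universal cover. Then \(f^*\) lifts to \(\tilde f:X\to\widetilde D\), since \(X\) is simply connected. The map \(\tilde f\) is a local biholomorphism and a local isometry by Result~\ref{R:cover-isometry}. \(X\) is complete hyperbolic by Results~\ref{R:complete-hyperbolic} and~\ref{R:cover-isometry}, and \(\widetilde D\) is hyperbolic because \(D^*\subset\D\). Lemma~\ref{L:cover} then gives that \(\tilde f\) is a biholomorphism, so \(f^*=p\circ\tilde f^{-1}\) is a covering.

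Next comes the square-root trick. If \(D^*\neq\D\), pick \(a\in\D\setminus D^*\). The simply connected surface \(X\) admits a holomorphic square root \(h\) of \(\varphi_a\circ f^*\), where \(\varphi_a\) is the disk automorphism sending \(a\) to \(0\). Then \(g:=\varphi_{h(x_0)}\circ h\) is again a covering onto its image, since the squaring map restricted to the zero-free region is a covering and compositions of coverings onto open sets built this way remain coverings. A Schwarz--Pick computation gives \(|g'(x_0)|>|f^{*\prime}(x_0)|\), a contradiction. Hence \(D^*=\D\), \(f^*\) is a covering of the simply connected \(\D\) by the connected \(X\), and therefore \(X\cong\D\).

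Finally, for general \(\Omega\) with at least three complement points, I would send one complement point to \(\infty\) so that \(\Omega\subset\C\) omits \(0,1\), and exhaust it by bounded domains \(\Omega_j\uparrow\Omega\) containing a base point \(z_0\). Let \(\pi_j:\D\to\Omega_j\) be the coverings with \(\pi_j(0)=z_0\) and \(\pi_j'(0)>0\), so that \(\pi_j'(0)=1/\kappa_{\Omega_j}(z_0)\to1/\kappa_\Omega(z_0)>0\) by Lemma~\ref{L:exhaustion}. By Montel's theorem (maps into \(\Cp\)) a subsequence converges to a nonconstant \(\pi:\D\to\Omega\), which is locally injective by Hurwitz. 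The isometry identity \(\rho=\pi_j^*\kappa_{\Omega_j}\) passes to the limit by the uniform convergence of Lemma~\ref{L:exhaustion}, giving \(\rho=\pi^*\kappa_\Omega\). Lifting to the universal cover of \(\Omega\) and applying Lemma~\ref{L:cover} with \(X=\D\) complete shows that the universal cover of \(\Omega\) is \(\D\).
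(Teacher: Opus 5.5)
Your proposal is correct and follows essentially the same route as the paper. The shared steps are: the same extremal family of coverings $X\to\D$; Hurwitz plus the kernel-type argument to make the limit a local Kobayashi--Royden isometry; Lemma~\ref{L:cover} applied to the lift to the universal cover; the square-root trick; and exhaustion by bounded domains for the general case. For the isometry step, the paper makes your bound $\limsup\kappa_{D_n}\le\kappa_{D^*}$ precise, including at the moving points $f_n(p)$, by using $\kappa_{D_n}\le\kappa_G$ for $G\Subset D^*$ and then letting $G\uparrow D^*$ via Lemma~\ref{L:exhaustion}.

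One small tightening concerns the square root $h$. Its covering property does not come from composing coverings. It comes from the standard fact that if $Q\circ h=\varphi_a\circ f^*$, with both $\varphi_a\circ f^*$ and $Q(w)=w^2$ coverings over $\varphi_a(D^*)$, then $h$ is a covering onto a component of $Q^{-1}(\varphi_a(D^*))$.
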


\begin{proof}
  The only non-trivial case is when \(\Chat\setminus\Omega\) contains at least
  three points. In this case, \(\Omega\) is complete
  Kobayashi hyperbolic by Result~\ref{R:complete-hyperbolic}. We may assume that \(\Omega\) does not contain the
  point at infinity. Let \(X\) be the universal cover of \(\Omega\). We will
  show that \(X\) is biholomorphic to \(\D\). We first consider the case when \(\Omega\) is
  bounded. We will exhibit the required biholomorphism as an extremal map in a
  suitable family of holomorphic maps from \(X\) to \(\D\).  
  Fix \(z_0\in X\).  It is evident that the family
\[
        \mathcal F := \left\{f\in\hol(X,\D): f(z_0)=0,
        \ f:X\to f(X) \text{ is a holomorphic covering}\right\}
\]
is nonempty.

Choose a coordinate chart around $z_0$, say \(\chi:U\to\D_R\), with \(\chi(z_0)=0\).  Define
\[
        M := \sup_{f\in\mathcal F}\left|(f\circ\chi^{-1})'(0)\right|.
\]
It is clear that $M > 0$. We will show that \(M<\infty\). 
Choose \(f_n\in\mathcal F\) with
\[
        \left|(f_n\circ\chi^{-1})'(0)\right|\to M.
\]
By Montel's theorem, a subsequence converges locally uniformly to a holomorphic
map \(g:X\to\overline\D\). This shows that $M$ cannot be infinite.  Since
\[
        \left|(g\circ\chi^{-1})'(0)\right|=M>0,
\]
the map \(g\) is nonconstant. As $g(z_0) = 0$, it follows that \(g(X)\subset\D\).

We next prove that \(g\) is locally biholomorphic.  In any local coordinate
\(\psi\), the functions \((f_n\circ\psi^{-1})'\) are zero-free and converge
locally uniformly to \((g\circ\psi^{-1})'\).  Hurwitz's theorem implies that the
latter is either zero-free or identically zero.  The latter would
force \(g\) to be constant on \(X\), contradicting \((g\circ\chi^{-1})'(0)\neq0\).
Thus \(g\) is locally biholomorphic.

Set $D_n := f_n(X)$ and $D := g(X)$.
We claim that any compact subset \(K\) of \(D\) is contained in \(D_n\) eventually.
Suppose not.  Then there is a subsequence \(n_j\) and points \(y_j\in K\) such that \(y_j\notin D_{n_j}\).  Passing to a further subsequence, assume \(y_j\to y\in K\).  Since \(y\in D\), choose \(a\in X\) with \(g(a)=y\).  The functions
$h_j := f_{n_j}-y_j$ are zero-free on \(X\), and \(h_j\to g-y\) uniformly on compact sets.  By Hurwitz's
theorem, \(g-y\) is either zero-free or identically zero.  But \((g-y)(a)=0\),
while \(g-y\not\equiv0\) because \(g\) is nonconstant.  This contradiction
proves the claim.

We now claim that $g$ is a local isometry, i.e., $g^*F_D=F_X$.
Contractivity gives
\[
        F_D(g(p);dg_p\xi)\leq F_X(p;\xi).
\]
For the reverse inequality, choose a relatively compact domain \(G\Subset D\)
with \(g(p)\in G\).  For all sufficiently large \(n\), we have \(f_n(p)\in G\)
and \(\overline G\subset D_n\).  By monotonicity of the Kobayashi metric under
inclusions and by Result~\ref{R:cover-isometry}, 
\[
        F_X(p;\xi)
        =F_{D_n}(f_n(p);df_n(p)\xi)
        \leq F_G(f_n(p);df_n(p)\xi).
\]
Letting \(n\to\infty\), and using the continuity of the Kobayashi metric on the relatively compact hyperbolic domain \(G\), gives
\[
        F_X(p;\xi)
        \leq F_G(g(p);dg_p\xi).
\]
We now exhaust $D$ by relatively compact domains \(G_i \Subset D\) with $G_1 \subset G_2 \subset \cdots$. Lemma~\ref{L:exhaustion} gives \(F_{G_i}\downarrow F_D\).  Hence
\[
        F_X(p;\xi)\leq F_D(g(p);dg_p\xi).
\]
The two inequalities prove the claim. Thus \(g\) is a local isometry with
respect to the Kobayashi--Royden metrics. Let $Y$ be the universal cover of $D$.
Since \(X\) is simply connected, \(g:X\to D\) lifts to a map
\(\widetilde g:X\to Y\). This lift is again a local biholomorphism and a
local isometry for the Kobayashi--Royden metrics. By Lemma~\ref{L:cover},
\(\widetilde g\) is a biholomorphism. Hence \(g:X\to D\) is a holomorphic
covering map and therefore $g \in \mathcal F$. 
The standard square-root trick shows that $D =\disk$ (see \cite[p.
231]{Ahlfors2021}) and as $g:X \to \disk$ is a holomorphic covering map, it follows that $X$ is biholomorphic to $\disk$. This settles the case of bounded domains.

For the unbounded case, let \(p\in\Omega\), and choose an exhaustion
\[
        D_1\Subset D_2\Subset\cdots\Subset\Omega,
        \qquad p\in D_1,
        \qquad \bigcup_nD_n=\Omega,
\]
by bounded domains.  We have holomorphic coverings
\[
        g_n:\D\to D_n,
        \qquad g_n(0)=p.
\]
By Montel's theorem, the maps form a normal family. As $g_n(0) = p$ for all $n$,
any subsequential limit, say $g$, must map into $\OM$. Moreover,
using \(g_n^*F_{D_n}=F_{\D}\) and Lemma~\ref{L:exhaustion}, we have
\[
        |g_n'(0)|=\kappa_{D_n}(p)^{-1}\longrightarrow \kappa_\Omega(p)^{-1}>0.
\]
Hence \(g\) is nonconstant. Once again,
Hurwitz's theorem applied to the derivatives shows that \(g\) is a local
biholomorphism. Repeating the compact-exhaustion argument used in the bounded
case shows that $g:\D \to \OM$ is a local isometry with respect to the
Kobayashi--Royden metrics. Let $Y$ be the universal cover of $\OM$ and let
$h:\D\to Y$ be a lift of $g:\D \to \OM$. By Result~\ref{R:cover-isometry}, it is
evident that $h$ is a local isometry for the Kobayashi--Royden metrics.
Lemma~\ref{L:cover} now implies that \(h\) is a
biholomorphism. Consequently the universal cover \(Y\) of
\(\Omega\) is biholomorphic to \(\D\). This proves the theorem.
\end{proof}

Combining the uniformization theorem for domains in the sphere with 
Theorem~6.2 from \cite{Bharathi2026kob} gives the following.

\begin{corollary}
  A domain in the Riemann sphere is Kobayashi hyperbolic if and only if its
  universal cover is the unit disk. Furthermore, such Kobayashi
  hyperbolic domains are automatically complete Kobayashi hyperbolic.
\end{corollary}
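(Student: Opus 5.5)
The corollary should follow by pairing Theorem~\ref{thm:sphere-uniformization-streamlined} with Result~\ref{R:complete-hyperbolic} and Result~\ref{R:cover-isometry}. I will prove the two directions of the equivalence separately; completeness then comes for free.

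\emph{Kobayashi hyperbolic implies the universal cover is the disk.} Let \(\Omega\subset\Chat\) be a Kobayashi hyperbolic domain. By Result~\ref{R:complete-hyperbolic}, \(\Chat\setminus\Omega\) contains at least three points. Part (iv) of Theorem~\ref{thm:sphere-uniformization-streamlined} then says the universal cover of \(\Omega\) is \(\D\).

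\emph{Universal cover the disk implies Kobayashi hyperbolic.} Suppose the universal cover of \(\Omega\) is \(\D\). The disk is complete Kobayashi hyperbolic: its Kobayashi distance is the Poincar\'e distance, which is complete. Applying Result~\ref{R:cover-isometry} to the covering map \(\D\to\Omega\), hyperbolicity and completeness pass from \(\D\) down to \(\Omega\). So \(\Omega\) is complete Kobayashi hyperbolic.

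Alternatively, one can argue via the complement. If the universal cover is \(\D\), then cases (i)--(iii) of the theorem are excluded, because \(\D\) is not biholomorphic to \(\Chat\) or \(\C\). By Liouville's theorem there is no nonconstant bounded entire function, and \(\D\) is noncompact while \(\Chat\) is compact. Hence the complement has at least three points, and Result~\ref{R:complete-hyperbolic} applies.

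The \emph{furthermore} clause is exactly the completeness statement already contained in Result~\ref{R:complete-hyperbolic}. The whole argument is bookkeeping. The only point needing any care is that the universal cover is unique up to biholomorphism, so the uniqueness phrase ``exactly one of'' in the theorem is consistent with the reverse direction. This holds because \(\Chat\), \(\C\) and \(\D\) are pairwise non-biholomorphic, by compactness and Liouville's theorem.
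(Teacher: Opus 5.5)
Your proposal is correct and follows the paper's own argument: the paper obtains the corollary by combining the uniformization theorem with Result~\ref{R:complete-hyperbolic}, exactly as in your forward direction, your complement-based reverse direction and your treatment of the completeness clause. Your other proof of the reverse direction also works: the disk is complete hyperbolic, and Result~\ref{R:cover-isometry} passes hyperbolicity and completeness down the covering map.
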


Once uniformization is established, we can exhibit the universal covering map 
as the extremal map that realizes the Kobayashi--Royden density at a given point.

\begin{corollary}
  Let \(\Omega\subset\Chat\) be a domain whose complement contains at least
  three points. Let $p \in \OM$ and consider the family
  \[
      \mathcal F := \{f \in \hol(\D,\OM): f(0) = p\}, \qquad M := \sup_{f \in \mathcal F} |f'(0)|.
  \]
  Then \(M<\infty\) and there is a holomorphic covering map $g \in \mathcal F$
  with \(g(0)=p\) such that \(|g'(0)|=M\). Consequently, $g:\D \to \OM$ is the
  universal covering map of $\OM$ with $g(0) = p$. 
\end{corollary}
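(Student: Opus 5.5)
We first reduce to the case $\Omega\subset\C$ by a M\"obius transformation. This is harmless: if $T$ is a M\"obius map with $T(\Omega)\subset\C$ and $T(p)=q$, then composing with $T$ gives a bijection between $\mathcal F$ and the analogous family for $T(\Omega)$ at $q$, and $|(T\circ f)'(0)|=|T'(p)|\,|f'(0)|$. So the extremal problems correspond, with $M$ rescaled by $|T'(p)|\neq0$, and covering maps correspond to covering maps.

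By Theorem~\ref{thm:sphere-uniformization-streamlined}, the universal cover of $\Omega$ is $\D$. Fix a holomorphic covering $g:\D\to\Omega$. The deck action is transitive on fibres, and disk automorphisms act transitively on $\D$. Precomposing with an automorphism therefore gives $g(0)=p$, and a further rotation makes $g'(0)>0$. In particular $g\in\mathcal F$.

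The key step is the identity
\[
  M=\kappa_\Omega(p)^{-1}=|g'(0)|.
\]
We argue it through the Kobayashi--Royden density:
\begin{enumerate}
  \item By definition, $\kappa_\Omega(p)=\inf\{1/|f'(0)|: f\in\mathcal F\}$, so $M=\kappa_\Omega(p)^{-1}$ immediately. This already gives $M<\infty$, because $\kappa_\Omega(p)>0$: $\Omega$ is Kobayashi hyperbolic by Result~\ref{R:complete-hyperbolic}, and the Kobayashi--Royden metric of a hyperbolic planar domain is nondegenerate.
  \item Alternatively, finiteness follows from Montel's theorem. If $|f_n'(0)|\to\infty$ with $f_n(0)=p$, a subsequence would converge locally uniformly to a holomorphic limit, and Weierstrass would force $f_n'(0)$ to converge, a contradiction.
  \item By Result~\ref{R:cover-isometry}, $g^*F_\Omega=F_\D$. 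Evaluating at $0$ with $v=1$ gives $\kappa_\Omega(p)|g'(0)|=F_\D(0;1)=\rho(0;1)=1$, using that on $\D$ the Kobayashi--Royden metric is the Poincar\'e metric (Schwarz lemma). Hence $|g'(0)|=\kappa_\Omega(p)^{-1}=M$.
\end{enumerate}

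The main point needing care is the equality $F_\D=\rho$ together with the positivity of $\kappa_\Omega(p)$. Both are standard: the former is the Schwarz--Pick lemma, and the latter is hyperbolicity plus the Royden comparison. Everything else is direct from the theorem and Result~\ref{R:cover-isometry}.

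The final sentence of the statement (``consequently $g$ is the universal covering map with $g(0)=p$'') holds by construction, since $g$ was chosen to be a covering from the simply connected $\D$. The extremal map is also unique up to rotation. Indeed, if $f\in\mathcal F$ attains $M$, lift $f$ through $g$ to $\tilde f:\D\to\D$ with $\tilde f(0)=0$. Then $|\tilde f'(0)|=|f'(0)|/|g'(0)|=1$, so $\tilde f$ is a rotation by the Schwarz lemma. Hence every extremal $f$ is a covering, which matches the phrasing ``there is a holomorphic covering map $g\in\mathcal F$ with $|g'(0)|=M$.''
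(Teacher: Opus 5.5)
Your proof is correct, but it runs in the opposite direction from the paper's.

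The paper first produces an extremal map $g$ by Montel's theorem, as a normal-families limit with $g(0)=p$ and $|g'(0)|=M$. It then takes the universal covering $\pi:\D\to\OM$ with $\pi(0)=p$ and lifts $g$ to $h:\D\to\D$ with $h(0)=0$. From $\kappa_\OM(p)=1/|\pi'(0)|=1/|g'(0)|$ and the Schwarz lemma, $h$ is an automorphism, so the extremal map is a covering.

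You instead start from the covering supplied by the uniformization theorem. Via Result~\ref{R:cover-isometry} and $F_\D(0;1)=1$, you show it already attains $M=\kappa_\OM(p)^{-1}$. Existence of an extremal map is then immediate, and no normal-family extraction is needed, nor any check that the limit lands in $\OM$. The ingredients are the same: the identity $|\pi'(0)|=\kappa_\OM(p)^{-1}$ is a step in both proofs. Your arrangement makes the Montel step redundant for the statement as literally written.

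The paper's arrangement emphasizes the converse: the map produced by the extremal problem \emph{is} the covering, which is the point announced just before the corollary. Your closing paragraph lifts an arbitrary extremal $f$ through $g$ and applies the Schwarz lemma, which is exactly the paper's argument. So your write-up recovers that direction too, and additionally gives uniqueness up to rotation.

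A small remark: you do not need transitivity of the deck group to arrange $g(0)=p$. Surjectivity of $g$ and transitivity of $\mathrm{Aut}(\D)$ suffice.
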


\begin{proof}
  That $M < \infty$ follows from the fact that $\OM$ is Kobayashi hyperbolic and
  hence $\kappa_{\OM}(p) > 0$. By Montel's theorem, $\mathcal F$ is a normal
  family and therefore there is a subsequence of maps in $\mathcal F$ that
  converges uniformly on compact subsets to a holomorphic map $g:\D \to \OM$ with $g(0) =
  p$ and $|g'(0)| = M$. Let $\pi:\D \to \OM$ be the universal covering map with
  $\pi(0) = p$ and let $h:\D \to \D$ be a lift of $g:\D \to \OM$ by $\pi$ with $h(0) = 0$. As
  $\kappa_{\OM}(p) = 1/|\pi'(0)| = 1/|g'(0)|$, the Schwarz lemma implies that
  $h$  is an automorphism of $\D$. Hence $g = \pi \circ h$ is a holomorphic
  covering map.
\end{proof}

\subsection*{Disclosure on the use of AI tools.}
ChatGPT-5.5 Plus was used for drafting preliminary versions of some routine arguments, copyediting, and proofreading. GitHub Copilot was used for LaTeX autocompletion. 

 \bibliographystyle{amsalpha}

\bibliography{Kob_hyperbolic}

@book{Abate2023book,
  AUTHOR        = {Abate, Marco},
  TITLE         = {Holomorphic dynamics on hyperbolic {Riemann} surfaces},
  SERIES        = {De Gruyter Studies in Mathematics},
  VOLUME        = {89},
  PUBLISHER     = {De Gruyter, Berlin},
  YEAR          = {2023},
  PAGES         = {xiii+356},
  ISBN          = {978-3-11-060197-8; 978-3-11-060105-3; 978-059874-2},
  MRCLASS       = {37-02 (30C80 30F45 30J99 37F44 37F99)},
}

@book {Sakai1996,
    AUTHOR = {Sakai, Takashi},
     TITLE = {Riemannian geometry},
    SERIES = {Translations of Mathematical Monographs},
    VOLUME = {149},
      NOTE = {Translated from the 1992 Japanese original by the author},
 PUBLISHER = {American Mathematical Society, Providence, RI},
      YEAR = {1996},
     PAGES = {xiv+358},
      ISBN = {0-8218-0284-4},
   MRCLASS = {53-01 (53-02)},
MRREVIEWER = {Conrad\ Plaut},
       DOI = {10.1090/mmono/149},
       URL = {https://doi.org/10.1090/mmono/149},
}

@book{Ahlfors2021,
  AUTHOR        = {Ahlfors, Lars V.},
  TITLE         = {Complex analysis---an introduction to the theory of
                   analytic functions of one complex variable},
  EDITION       = {Third},
  NOTE          = {Reprint of the 1978 original},
  PUBLISHER     = {AMS Chelsea Publishing, Providence, RI},
  YEAR          = {2021},
  PAGES         = {xiv+331},
  ISBN          = {978-1-4704-6767-8},
  MRCLASS       = {30-01},
}

@book{Zakeri2021,
  AUTHOR        = {Zakeri, Saeed},
  TITLE         = {A course in complex analysis},
  PUBLISHER     = {Princeton University Press, Princeton, NJ},
  YEAR          = {2021},
  PAGES         = {xii+428},
  ISBN          = {9780691207582; 9780691218502},
  MRCLASS       = {30-01},
}

@misc{bharathi2026kob,
  author        = {Thiruvengadam, Bharathi and Janardhanan, Jaikrishnan},
  title         = {Revisiting {Kobayashi} hyperbolicity on planar domains},
  year          = {2026},
  howpublished  = {Preprint, {arXiv}:2604.19095 [math.{CV}]},
  eprint        = {2604.19095},
  archivePrefix = {arXiv},
  primaryClass  = {math.CV},
  url           = {https://arxiv.org/abs/2604.19095},
}

@article{Grauert1965hermitian,
  AUTHOR        = {Grauert, Hans and Reckziegel, Helmut},
  TITLE         = {Hermitesche {Metriken} und normale {Familien} holomorpher
                   {Abbildungen}},
  JOURNAL       = {Math. Z.},
  FJOURNAL      = {Mathematische Zeitschrift},
  VOLUME        = {89},
  YEAR          = {1965},
  PAGES         = {108--125},
  ISSN          = {0025-5874,1432-1823},
  MRCLASS       = {32.60 (53.80)},
  MRNUMBER      = {194617},
  MRREVIEWER    = {R.\ C.\ Gunning},
  DOI           = {10.1007/BF01111588},
  URL           = {https://doi.org/10.1007/BF01111588},
}

@book{kim2011schwarz,
  AUTHOR        = {Kim, Kang-Tae and Lee, Hanjin},
  TITLE         = {{Schwarz}'s lemma from a differential geometric viewpoint},
  SERIES        = {{IISc} Lecture Notes Series},
  VOLUME        = {2},
  PUBLISHER     = {IISc Press, Bangalore; World Scientific Publishing Co. Pte.
                   Ltd., Hackensack, NJ},
  YEAR          = {2011},
  PAGES         = {xvi+82},
  ISBN          = {978-981-4324-78-6; 981-4324-78-7},
  MRCLASS       = {30-02 (30C80 30F45 32H25 32Q45 53C20 53C55)},
  MRREVIEWER    = {Pavel\ A.\ Gumenuk},
}

@article {Heins1962,
    AUTHOR = {Heins, Maurice},
     TITLE = {On a class of conformal metrics},
   JOURNAL = {Nagoya Math. J.},
  FJOURNAL = {Nagoya Mathematical Journal},
    VOLUME = {21},
      YEAR = {1962},
     PAGES = {1--60},
      ISSN = {0027-7630,2152-6842},
   MRCLASS = {30.45},
MRREVIEWER = {B.\ Rodin},
       URL = {http://projecteuclid.org/euclid.nmj/1118801041},
}

@book{Ahlfors1960ConformalInvariants,
  AUTHOR        = {Ahlfors, Lars V.},
  TITLE         = {Conformal invariants: topics in geometric function theory},
  SERIES        = {McGraw-Hill Series in Higher Mathematics},
  PUBLISHER     = {McGraw-Hill Book Co., New York-D\"usseldorf-Johannesburg},
  YEAR          = {1973},
  PAGES         = {ix+157},
  MRCLASS       = {30-02},
  MRREVIEWER    = {M.\ H.\ Heins},
}

@article{Hahn1980,
  AUTHOR        = {Hahn, Kyong T.},
  TITLE         = {Equivalence of the classical theorems of {Schottky},
                   {Landau}, {Picard} and hyperbolicity},
  JOURNAL       = {Proc. Amer. Math. Soc.},
  FJOURNAL      = {Proceedings of the American Mathematical Society},
  VOLUME        = {89},
  YEAR          = {1983},
  NUMBER        = {4},
  PAGES         = {628--632},
  ISSN          = {0002-9939,1088-6826},
  MRCLASS       = {30F10 (30C99 32H15)},
  MRNUMBER      = {718986},
  MRREVIEWER    = {Toshio\ Urata},
  DOI           = {10.2307/2044595},
  URL           = {https://doi.org/10.2307/2044595},
}

@article{Ahlfors1938,
  AUTHOR        = {Ahlfors, Lars V.},
  TITLE         = {An extension of {Schwarz}'s lemma},
  JOURNAL       = {Trans. Amer. Math. Soc.},
  FJOURNAL      = {Transactions of the American Mathematical Society},
  VOLUME        = {43},
  YEAR          = {1938},
  NUMBER        = {3},
  PAGES         = {359--364},
  ISSN          = {0002-9947,1088-6850},
  MRCLASS       = {30C80},
  DOI           = {10.2307/1990065},
  URL           = {https://doi.org/10.2307/1990065},
}

@article{Kobayashi1967,
  AUTHOR        = {Kobayashi, Shoshichi},
  TITLE         = {Invariant distances on complex manifolds and holomorphic
                   mappings},
  JOURNAL       = {J. Math. Soc. Japan},
  FJOURNAL      = {Journal of the Mathematical Society of Japan},
  VOLUME        = {19},
  YEAR          = {1967},
  PAGES         = {460--480},
  ISSN          = {0025-5645,1881-1167},
  MRCLASS       = {57.60 (32.00)},
  DOI           = {10.2969/jmsj/01940460},
  URL           = {https://doi.org/10.2969/jmsj/01940460},
}

@article{Osserman1999Notices,
  AUTHOR        = {Osserman, Robert},
  TITLE         = {From {Schwarz} to {Pick} to {Ahlfors} and beyond},
  JOURNAL       = {Notices Amer. Math. Soc.},
  FJOURNAL      = {Notices of the American Mathematical Society},
  VOLUME        = {46},
  YEAR          = {1999},
  NUMBER        = {8},
  PAGES         = {868--873},
  ISSN          = {0002-9920,1088-9477},
  MRCLASS       = {30C80 (01A55 01A60 30-03 58D17)},
  MRREVIEWER    = {P.\ Lappan},
}

@book{SKobayashi1998,
  AUTHOR        = {Kobayashi, Shoshichi},
  TITLE         = {Hyperbolic complex spaces},
  SERIES        = {Grundlehren der mathematischen Wissenschaften [Fundamental
                   Principles of Mathematical Sciences]},
  VOLUME        = {318},
  PUBLISHER     = {Springer-Verlag, Berlin},
  YEAR          = {1998},
  PAGES         = {xiv+471},
  ISBN          = {3-540-63534-3},
  MRCLASS       = {32H20 (32H15 32H25 32H30)},
  MRREVIEWER    = {William\ A.\ Cherry},
  DOI           = {10.1007/978-3-662-03582-5},
  URL           = {https://doi.org/10.1007/978-3-662-03582-5},
}

@book{jp2013,
  AUTHOR        = {Jarnicki, Marek and Pflug, Peter},
  TITLE         = {Invariant distances and metrics in complex analysis},
  SERIES        = {De Gruyter Expositions in Mathematics},
  VOLUME        = {9},
  EDITION       = {extended},
  PUBLISHER     = {Walter de Gruyter GmbH \& Co. KG, Berlin},
  YEAR          = {2013},
  PAGES         = {xviii+861},
  ISBN          = {978-3-11-025043-5; 978-3-11-025386-3},
  MRCLASS       = {32-02 (32F45)},
  MRREVIEWER    = {Viorel\ V\^{a}j\^{a}itu},
  DOI           = {10.1515/9783110253863},
  URL           = {https://doi.org/10.1515/9783110253863},
}

@incollection {RothKraus2013,
    AUTHOR = {Kraus, Daniela and Roth, Oliver},
     TITLE = {Conformal metrics},
 BOOKTITLE = {Topics in modern function theory},
    SERIES = {Ramanujan Math. Soc. Lect. Notes Ser.},
    VOLUME = {19},
     PAGES = {41--83},
 PUBLISHER = {Ramanujan Math. Soc., Mysore},
      YEAR = {2013},
      ISBN = {978-93-80416-12-0},
   MRCLASS = {30F45 (30C80 35J91 53A30)},
MRREVIEWER = {William\ Ma},
}

\end{document}